\documentclass{amsart}
\usepackage{amsmath, amsthm}
\usepackage{amssymb}
\usepackage{amsfonts}
\usepackage{latexsym}
\usepackage{mathrsfs}
\usepackage{bm}
\usepackage{hyperref}
\usepackage{graphicx}
\usepackage[dvipsnames,usenames]{color}
\hypersetup{
   unicode=false,          
   pdftoolbar=true,        
   pdfmenubar=true,        
   pdffitwindow=false,     
   pdfstartview={FitH},    
   pdftitle={My title},    
   pdfauthor={Author},     
   pdfsubject={Subject},   
   pdfcreator={Creator},   
   pdfproducer={Producer}, 
   pdfkeywords={keywords}, 
   pdfnewwindow=true,      
   colorlinks=true,       
   linkcolor=blue,          
   citecolor=blue,        
   filecolor=magenta,      
   urlcolor=MidnightBlue          
}

\theoremstyle{plain}
\pagestyle{plain}



\newtheorem{theorem}{Theorem}[section]

\newtheorem{lem}[theorem]{Lemma}
\newtheorem{pro}[theorem]{Proposition}
\newtheorem{Def}[theorem]{Definition}
\newtheorem{rem}[theorem]{Remark}
\numberwithin{equation}{section}

\newcommand{\cs}{conformal structure}

\newcommand{\cmc}{constant mean curvature}

\newcommand{\hcd}{holomorphic cubic differential}
\newcommand{\hqd}{holomorphic quadratic differential}

\newcommand{\hym}{hyperbolic metric}
\newcommand{\htm}{hyperbolic three-manifold}
\newcommand{\ift}{implicit function theorem}

\newcommand{\im}{induced metric}

\newcommand{\KE}{K\"{a}hler-Einstein}
\newcommand{\km}{K\"{a}hler manifold}

\newcommand{\lo}{linearized operator}
\newcommand{\mc}{mean curvature}
\newcommand{\mcf}{mean curvature flow}

\newcommand{\maxp}{maximum principle}
\newcommand{\mi}{minimal immersion}
\newcommand{\ml}{minimal Lagrangian}
\newcommand{\MS}{moduli space}
\newcommand{\ms}{minimal surface}
\newcommand{\mli}{minimal Lagrangian immersion}
\newcommand{\mls}{minimal Lagrangian surface}

\newcommand{\mpa}{mountain pass}

\newcommand{\mpt}{mountain pass theorem}

\newcommand{\pc}{principal curvature}
\newcommand{\qf}{quasi-Fuchsian}
\newcommand{\RS}{Riemann surface}
\newcommand{\sff}{second fundamental form}

\newcommand{\scv}{solution curve}

\newcommand{\TS}{Teichm\"{u}ller space}
\newcommand{\Tt}{Teichm\"{u}ller theory}

\newcommand{\tg}{totally geodesic}

\newcommand{\uc}{universal cover}

\newcommand{\WP}{Weil-Petersson}
\newcommand{\WPm}{Weil-Petersson metric}
\newcommand{\wrt}{with respect to}

\newcommand{\be}{\begin{equation}}
\newcommand{\ene}{\end{equation}}
\newcommand{\br}{\begin{rem}}
\newcommand{\er}{\end{rem}}
\newcommand{\bl}{\begin{lem}}
\newcommand{\el}{\end{lem}}
\newcommand{\bd}{\begin{Def}}
\newcommand{\ed}{\end{Def}}
\newcommand{\ben}{\begin{enumerate}}
\newcommand{\een}{\end{enumerate}}
\newcommand{\bp}{\begin{proof}}
\newcommand{\ep}{\end{proof}}
\newcommand{\bpo}{\begin{pro}}
\newcommand{\epo}{\end{pro}}
\newcommand{\beq}{\begin{equation*}}
\newcommand{\eeq}{\end{equation*}}
\newcommand{\bear}{\begin{eqnarray*}}
\newcommand{\eear}{\end{eqnarray*}}
\newcommand{\bt}{\begin{theorem}}
\newcommand{\et}{\end{theorem}}

\newcommand{\R}{\mathbb{R}}

\newcommand{\RH}{\mathbb{RH}^2}
\newcommand{\CH}{\mathbb{CH}^2}

\newcommand{\Acal}{\mathcal{A}}
\newcommand{\Fcal}{\mathcal{F}}

\newcommand{\Tcal}{\mathcal{T}}

\newcommand{\ddl}[2]{\frac{d{#1}}{d{#2}}}

\newcommand{\ppl}[2]{\frac{\partial{#1}}{\partial{#2}}}

\newcommand{\II}{\mathrm{I\!I}}

\numberwithin{equation}{section}

\allowdisplaybreaks


\def\XXint#1#2#3{{\setbox0=\hbox{$#1{#2#3}{\int}$}
    \vcenter{\hbox{$#2#3$}}\kern-.5\wd0}}

\makeatletter
\def\@citestyle{\m@th\upshape\mdseries}
\def\citeform#1{{\bfseries#1}}
\def\@cite#1#2{{%
  \@citestyle[\citeform{#1}\if@tempswa, #2\fi]}}
\@ifundefined{cite }{%
  \expandafter\let\csname cite \endcsname\cite
  \edef\cite{\@nx\protect\@xp\@nx\csname cite \endcsname}%
}{}
\makeatother

\begin{document}

\title
{Holomorphic cubic differentials and minimal Lagrangian surfaces in $\CH$}

\author{Zheng Huang}
\address{Department of Mathematics,
The City University of New York,
Staten Island, NY 10314, USA.}
\email{zheng.huang@csi.cuny.edu}

\author{John Loftin}
\address{Department of Mathematics and Computer Science,
Rutgers University - Newark,
Newark, NJ 07102, USA.}
\email{loftin@rutgers.edu}

\author{Marcello Lucia}
\address{Department of Mathematics,
The City University of New York,
Staten Island, NY 10314, USA.}
\email{mlucia@math.csi.cuny.edu}

\date{April 17, 2011}

\subjclass[2000]{Primary 53C42, Secondary 35J61, 53D12}

\begin{abstract}
Minimal Lagrangian submanifolds of a K\"ahler manifold represent a very interesting class of
submanifolds as they are Lagrangian with respect to the symplectic structure of the ambient space,
while minimal {\wrt} the Riemannian structure. In this paper we study {\mli}s of the {\uc} of closed
surfaces (of genus $g \ge 2$) in $\CH$, with prescribed data $(\sigma, tq)$, where $\sigma$ is a
{\cs} on the surface $S$, and $q dz^3$ is a {\hcd} on the {\RS} $(S,\sigma)$. We show existence
and non-uniqueness of such {\mli}s. We analyze the asymptotic behaviors for
such immersions, and establish the surface area {\wrt} the {\im} as a {\WP} potential function for
the space of {\hcd}s on $(S,\sigma)$.
\end{abstract}

\maketitle

\section {Introduction}

The theory of minimal hypersurfaces (co-dimension one) in a Riemannian manifold has been a field of
both extraordinary depth and far-reaching width in mathematics. The situation of higher co-dimensional
minimal submanifolds can be much more complicated. In this paper, we aim to investigate
some minimal submanifolds of co-dimension two, motivated from mirror symmetry and ``Lagrangian
Plateau problem" (\cite{SW99, SW01}). The {\ml} submanifolds in various ambient spaces can be studied
as a constrained variational problem (see for instance \cite{Oh90, MW93}), and there are many interesting
analogs to the classical {\ms} theory in Riemannian manifolds. In general, there are obstructions to the existence
of {\ml} submanifolds in a Riemannian manifold, even in the case of a {\km} (\cite{Bry87b}).

For a {\km} $M^{2n}$, one studies its {\ml} submanifolds: Lagrangian {\wrt} the symplectic structure and minimal
{\wrt} the Riemannian structure of $M^{2n}$. The obstructions in \cite{Bry87b} for existence do not occur if $M^{2n}$
is a {\KE} manifold, but the general existence is still largely unknown. In this paper we consider the existence and
multiplicity of {\mli}s of the {\uc} of closed {\RS}s into the complex hyperbolic plane $\CH$, with prescribed data on
the closed surface. Each such immersion is equivariant {\wrt} an induced representation of the fundamental group of the
surface into $SU(2,1)$.


It is well-known that (for instance, \cite{LJ70}), the second fundamental form of a {\ms} in a three-dimensional space
form is described as the real part of a {\hqd}. An analogous fact is true for minimal Lagrangian surfaces in $\CH$,
namely, such an immersion can be constructed from a {\cs} on a closed surface and a {\hcd} on this {\cs}. The space of
{\hcd}s on closed {\RS}s is deeply related to the space of convex flat projective structures on the surface
(\cite{Lof01, Lab07}).

Our perspective is to develop a moduli theory for {\mli}s of (the covering spaces of) closed
surfaces into $\CH$. We are particularly interested in the general existence and uniqueness properties of these
immersions for prescribed {\cs} and {\hcd}. Our method of study relies on reducing the immersion problem to the
solvability of the following equation from
\cite{LM10}:
 \be\label{e} \Delta u + 2 - 2e^u -16t^2\|q\|^2 e^{-2u} = 0,
 \ene
on a compact {\RS} equipped with a background {\hym} $g_\sigma$, {\hcd} $q$, and real positive parameter $t$. This
equation is the integrability condition of a {\ml} surface in $\mathbb{CH}^2$ with induced metric $e^ug_\sigma$ and
the {\sff} determined by $tq$.

More specifically, given the pair $(\sigma, tq)$, a solution to equation \eqref{e} gives rise to a Legendrian frame
(see \S 2.2) from the {\uc} $\tilde{\Sigma}$ to $SU(2,1)$, for a {\mli} $\varphi$ from
$\tilde{\Sigma}$ to $\CH$. Note that the group of interest $SU(2,1)$ is the triple covering of $PU(2,1)$, the
holomorphic isometry
group of $\CH$. Hence we obtain a natural representation of the fundamental group of $\Sigma$ into
$SU(2,1)$,  for which the {\mli} $\varphi$ is equivariant. The perspective of surface group representation theory is
explored in more detail in the paper \cite{LM10}.

Note that the {\im} provides a {\cs} and a background metric of constant curvature via the uniformization theorem.
Since the deformation of the {\cs}s on a closed surface is described by {\Tt}, we find extra tools to this problem,
as well as applications to {\Tt}.

Let us fix some notation and basic assumptions that will be frequently used throughout the paper.
\ben
\item
Let $\Sigma$ be a smooth, closed, oriented surface of genus $g \geq 2$, and $\sigma$ be a {\cs} on $\Sigma$,
with conformal coordinates $z$. Note that $\sigma$ is a point on {\TS} $\Tcal_g$ of {\RS}s (here we do not distinguish {\TS} and {\MS});
\item
Let $g_\sigma dzd\bar{z}$ be the {\hym} (of constant curvature $-1$) on $(\Sigma,\sigma)$, and $\Delta$ is the
Laplace operator for $g_\sigma$;
\item
Let $C(\sigma)$ be the space of {\hcd}s of the form $q(z)dz^3 $ on $(\Sigma,\sigma)$, where $\ppl{q(z)}{\bar{z}} = 0$.
Note that by the Riemann-Roch theorem, the complex dimension of $C(\sigma)$ is $5g-5$ (see for example \cite{FK80});
\item
We assign the following notation for a {\hcd} $q(z)dz^3$ with respect to the {\hym} $g_\sigma$:
\be\label{q}
\|q\| = \frac{|q|}{g_\sigma^{3/2}}.
\ene
Therefore it generates a natural $L^2$-pairing (the {\WP} pairing) of {\hcd}s in $C(\sigma)$. We always
assume $q \not\equiv 0$, but keep in mind that the cubic differential $q$ must necessarily have (finitely many) zeros on $\Sigma$.
\een
We are interested in understanding {\mls}s in $\CH$, in particular, the existence, uniqueness and asymptotic behaviors
of such immersions. Our main result can be summarized into the following:
 \bt\label{main}
Let $\Sigma$ be a closed marked surface of genus $g \ge 2$, $\sigma \in \Tcal_g$ be a {\cs} on
$\Sigma$, and $q dz^3 \in C(\sigma)$ be a {\hcd} on this marked
surface $(\Sigma,\sigma)$, then we have the following:
 \ben
\item There is a $T_0=T_0(\sigma,q)>0$ so that for any
$t\in(0,T_0)$, there are at least two immersed {\mli}s from $\tilde{\Sigma}$, the {\uc} of $\Sigma$, into $\CH$, determined by $(\sigma,tq)$;
\item There is a {\mli} from $\tilde{\Sigma}$ into $\CH$ determined by $(\sigma,T_0q)$;
\item (\cite{LM10})
There exists a $T = T(\sigma,q) > 0$ such that for any $t > T$,
there is no {\mli} of $\tilde{\Sigma}$ into $\CH$ determined by $(\sigma,tq)$.
 \een \et


The most technical parts of Theorem \ref{main} are the parts (i)
and (ii). Our approach for part (i) consists of two steps: We first
(see Theorem \ref{curve}) deploy the continuity method to produce a
{\scv} to equation \eqref{e}, and show corresponding solutions
are stable, then we produce an additional solution by the {\mpt} for
each stable solution obtained in Theorem \ref{curve}. Part (ii)
essentially determines the asymptotic behavior of the {\scv} on
which the linearized operator is positive. Proving part (ii)
requires the closedness estimate in Theorem \ref{u-bound}, i.e., the
continuity method extends to the endpoint $T_0$. This estimate
relies on the compactness of the surface $\Sigma$.  We also note an
antecedent to part (i) is proved already in \cite{LM10}: There is a
$\tilde T_0\in(0,T_0]$ so that there is a single solution of (\ref{e}) for each
$t<\tilde T_0$ is Theorem 5.1 of \cite{LM10}.

As an application to {\Tt}, we show that
\bt
The induced surface area $($for a unique {\mli} corresponding to data $(\sigma,q dz^3))$ is a potential function of the
{\WP} norm in the space of {\hcd}s on $\sigma \in \Tcal_g$.
\et

Understanding Lagrangian surfaces in $\CH$ is an important ingredient in studying representations in the complex
hyperbolic {\qf} space (see \cite{PP06}). Note that the pair $(\sigma, q)$ provides a parameter space of real dimension
$16g-16$, which agrees with the real dimension of complex hyperbolic {\qf} space. Equation \eqref{e} is one of several equations
corresponding to immersing closed surface (or the {\uc}) into other geometries. It is of great interest in higher {\Tt} to
understand the space of surface group representations into higher rank Lie groups, and to integrate techniques of nonlinear analysis
with the representation theory.

It is also worth mentioning that the Lagrangian property is
preserved under the {\mcf} in {\km}s (\cite{Smo96}), while {\ms}s or
surfaces of {\cmc} are often natural candidates for limiting
submanifolds (if exist) of various {\mcf}s. The understanding of
existence and uniqueness of {\mli}s can provide important geometric
insight for the analysis of Lagrangian {\mcf}s in {\km}s (see for
instance \cite{Wan08}). Minimal Lagrangian submanifolds also play
vital roles in the geometry of calibrated submanifolds in Calabi-Yau
manifolds (\cite{HL82}), the SYZ conjecture in mirror symmetry
(\cite{SYZ96, LYZ05}), and symplectic topology
(\cite{Joy05}), to name a few.

\subsection*{Plan of the paper}
This paper is organized as follows: In Section \S 2, after recalling the preliminaries of {\ml} submanifolds in $\CH$, we
set up the structure equation and reduce the {\mli} problem to the solutions to  equation \eqref{e} (Proposition \ref{reduction}),
and we relate the {\sff} of the {\mli} to the prescribed {\cs} and {\hcd} (Proposition \ref{sff}).  We prove part (iii) of Theorem
\ref{main} in the subsection \S3.1, and prove the existence of a {\scv} $\gamma$ in the subsection \S 3.2. In \S 3.3 we derive a
uniform estimate for the solutions on the {\scv} away from zero, and hence show the right endpoint $T_0$ is in fact included
on the {\scv} $\gamma$. We then complete the proof of Theorem \ref{main} in Section \S 4, where we focus on
the
non-uniqueness of {\mli}s with prescribed data. In Section \S 5, since the solutions near the trivial solution $\gamma(0)$ are
unique, we are able to define a functional on a subspace of the space of {\mli}s in $\CH$. As an application to {\Tt}, we show
this functional is a potential function of the {\WP} norm of {\hcd}s.
\subsection*{Acknowledgements}
The research of Huang was supported (in part) by a grant from a PSC-CUNY Research Award Program and a CIRG-CUNY
program, the research of Loftin is supported in part by a Simons Collaboration Grant for Mathematicians 210124, and the
research of Lucia is supported by projects MTM2008-06349-C03-01, MTM2011-27739-C04-01 (Spain) and 2009SGR345
(Catalunya), and a Simons Foundation Collaboration Grant for Mathematicians 210368.
\section{Minimal Lagrangian submanifolds in $\CH$}
\subsection{Complex Hyperbolic Space}
Before we move to our main interest in $\CH$, let us briefly mention a few general facts on {\ml} submanifolds in a {\km}.
Let $(M^{2n},\omega)$ be a {\km} where $\omega$ is its K\"{a}hler form. Let $N^n$ be a submanifold of dimension $n$ in
$M^{2n}$. The inclusion map $i: N^n \to M^{2n}$ is called {\it Lagrangian} if
\beq
i^{*} \omega \equiv 0.
\eeq
In other words, a Lagrangian submanifold is characterized by the vanishing of $\omega|_N$. In terms of the
Riemannian structure on $M^{2n}$, the submanifold $N^n$ is Lagrangian if the tangent space $TM$ restricted on
$N^n$ is an orthogonal direct sum as follows:
\be\label{decomp}
TM_N = TN \oplus J\cdot TN,
\ene
where $J$ is the complex structure on $M^{2n}$ such that the Riemannian metric on $M^{2n}$ is given by $\omega(X,JY)$ for
tangent vectors $X$ and $Y$.

A Lagrangian submanifold $N^n$ is {\it minimal} if its {\mc} vector is identically zero. Similar to the {\ms} case, these minimal
submanifolds are critical points of the volume {\wrt} induced measure from $M^{2n}$ (\cite{Sim68}).

We now consider the space $\CH$, and use the projective model. Consider the inner product
$\langle\cdot,\cdot\rangle$ on $\mathbb C^{2,1}$ as follows:
\be\label{inner}
\langle v,w\rangle = v_1\bar w_1+v_2\bar w_2-v_3\bar w_3.
\ene
We denote the cone $W_-=\{v\in\mathbb C^{2,1} : \langle v,v\rangle<0\}$, and let $P$ be the natural projection
$\mathbb C^{2,1}\setminus\{0\}\to\mathbb{CP}^2$. Then we define $\CH$ as a complex manifold to be the image
of $W_-$ under this projection $P$. The space $\CH$ carries a natural tautological $S^1$-bundle: the pseudo-sphere
\beq
S_- =\{u \in W_-: \langle u,u\rangle = -1\},
\eeq
and $-\langle \cdot,\cdot\rangle$ induces a metric connection on the $S^1$-bundle: $\pi: S_- \to \CH$.

Note that $\CH$ does not admit any real {\tg} hypersurface, but
there are two kinds of {\tg} co-dimension two subspaces (see
\cite{Gol99}), namely, the complex line of constant curvature $-1$,
and the real Lagrangian plane $\RH$ of constant curvature
$-\frac14$. All {\mli}s we consider here are from the {\uc} of a
closed surface into $\CH$. Equation \eqref{e} has a trivial
solution, when $t = 0$, $u = 0$. This trivial solution corresponds
to the {\tg} Lagrangian embedding of $\RH$ in $\CH$, and the
corresponding representations of the surface group are Fuchsian. Any
solution $(u(t),t)$ obtained near $(0, 0)$ gives rise to complex
hyperbolic {\qf} representations, as seen in \cite{LM10}. The
geometry of representations  for solutions $(u(t),t)$ outside a
neighborhood of $(0, 0)$ is unclear.

\subsection{The Structure Equation and the Reconstruction} In this subsection, we briefly recall the derivation and setup of
the structure equations for {\mls}s in $\CH$.

Let $D =\tilde{\Sigma} = \{z\in \mathbb{C}: |z| < 1\}$ be the unit disk, the {\uc} of the surface $\Sigma$, then for
any Lagrangian immersion $\varphi: D \to \CH$, it admits a horizontal Legendrian lift $f: \tilde{\Sigma} \to S_-$. This lift
gives rise to a frame, for any $z \in D$:
\be\label{Frame}
F = (f_z/|f_z|\,\,\, f_{\bar z}/|f_{\bar z}| \,\,\, f).
\ene
This frame $F(z)$ lies in $U(2,1)$. Furthermore, it is shown in \cite{LM10} that $F$ lies in $SU(2,1)$ if and only if $\varphi$ is
a {\mli}.

Let $\varphi$ be a conformal Lagrangian immersion of the unit disk $D$ to $\CH$. There is a local Legendrian lift
$f\!: D \to S_-\subset \mathbb C^{2,1}$ so that
\beq
\langle f,f\rangle = -1,\qquad \langle f,f_z
\rangle = \langle f, f_{\bar z} \rangle = 0,\qquad \langle
f_z,f_{\bar z}\rangle = 0,
\eeq
and we write the first fundamental form $\langle f_z,f_z\rangle = \langle f_{\bar z},f_{\bar z} \rangle$ as $s^2$, then
$2s^2|dz|^2$ is the local expression of the metric on $D$. For a conformal map, the minimality of $\varphi$ is just the
condition for it to be a harmonic map
$$\langle f_{z\bar z}, f_z \rangle  = \langle f_{z\bar z} , f_{\bar z}
\rangle= 0.$$

Define
\beq
q = \langle f_{zzz},f\rangle = -\langle f_{zz},f_{\bar z}\rangle,
\eeq
and we may compute by taking $z$ and $\bar z$ derivatives
of the above equations to find:
$$ \langle f_{zz},f_z \rangle = 2ss_z, \qquad \langle f_{\bar z \bar
z} , f_{\bar z} \rangle = 2ss_{\bar z}, \qquad \langle f_{\bar z
\bar z}, f_z \rangle = \bar q.$$

The frame $F =(f_z/|f_z|\,\,\, f_{\bar z}/|f_{\bar z}| \,\,\, f)$ lies in $SU(2,1)$, and we define the Maurer-Cartan
form
\be\label{alpha}
\alpha = F^{-1}dF = A dz + Bd\bar{z},
\ene
where
\beq
A = F^{-1}F_z = \left(\begin{array}{ccc} (\log s)_z &
0 & s \\
-qs^{-2} &- (\log s)_z & 0 \\
0 & s & 0
\end{array} \right),
\eeq
and
\beq
 B =F^{-1}F_{\bar{z}} = \left(\begin{array}{ccc} -  (\log s)_{\bar z} &
\bar q s^{-2} & 0 \\
0 & (\log s)_{\bar z} & s \\
s& 0 & 0
\end{array} \right).
\eeq
The Maurer-Cartan equations $d\alpha+\alpha\wedge\alpha=0$ are equivalent to the following
\begin{equation} \label{e-local}
   \left\{
  \begin{array}{cl}
   q_{\bar z}=0    &    \\
     \frac{\partial^2} {\partial z
\partial \bar z} \log(s^2) = |q|^2 s^{-4} + s^2    &
  \end{array}. \right.
\end{equation}
Now using $\Delta$ as the Laplacian of the {\hym} $g_\sigma$ on $(\Sigma, \sigma(z))$, and
\beq
\Delta = \frac{4}{g_\sigma}\frac{\partial^2}{\partial z\partial\bar{z}}, \ \ \
-1 = -\frac{2}{g_\sigma}\frac{\partial^2}{\partial z\partial\bar{z}}\log(g_\sigma).
\eeq
Since $e^u g_\sigma = 2s^2$,  the second equation of \eqref{e-local} becomes the local version of
\be\label{structure}
\Delta u + 2 - 2e^u - 16\|q\|^2 e^{-2u} = 0.
\ene
Since the Maurer-Cartan equations are the integrability conditions for the frame $F$, we have the following
local characterization of minimal Lagrangian immersions in $\CH$:
\begin{pro}\label{reduction}
A conformal minimal Lagrangian immersion $\varphi\!:D\to \CH$ induces a
holomorphic cubic differential $q$ and metric $2s^2|dz|^2$ which
satisfy \eqref{e-local}. Conversely, if $\sigma$ is a {\cs} on $\Sigma$ and $q$ is a {\hcd} on
$(\Sigma,\sigma)$. Then any solution $u: \Sigma \to \R$ to \eqref{structure} determines a Legendrian
frame $F\!:D = \tilde{\Sigma}\to SU(2,1)$ for a {\mli} $\varphi: D \to\CH$ which is equivariant {\wrt}
some surface group representation from $\pi_1(\Sigma)$ into $SU(2,1)$. The immersed minimal
Lagrangian surface is unique up to holomorphic isometries of $\CH$.
\end{pro}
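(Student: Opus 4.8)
For the forward direction I would simply assemble the computation carried out just above the statement. Given a conformal {\mli} $\varphi\colon D\to\CH$, I take the horizontal Legendrian lift $f\colon D\to S_-$, form the frame $F$ of \eqref{Frame}, set $q=\langle f_{zzz},f\rangle$, and record the induced metric $2s^2|dz|^2$ with $s^2=\langle f_z,f_z\rangle$. By the result quoted from \cite{LM10}, $F$ lies in $SU(2,1)$ exactly because $\varphi$ is minimal Lagrangian, so its Maurer--Cartan form $\alpha=F^{-1}dF$ of \eqref{alpha} is $\mathfrak{su}(2,1)$-valued, and the flatness identity $d\alpha+\alpha\wedge\alpha=0$ is equivalent to the system \eqref{e-local}. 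Its first line is the holomorphicity $q_{\bar z}=0$ and its second line is the local form of \eqref{structure}; this is exactly the assertion.

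For the converse I would run the construction in reverse. Starting from $\sigma$, a {\hcd} $q$, and a solution $u$ of \eqref{structure}, I set $s=\sqrt{\tfrac12 e^u g_\sigma}>0$ and define $A,B$ by the same matrix formulas, obtaining a $1$-form $\alpha=A\,dz+B\,d\bar z$ on $D$. The first step is the pointwise check that $\alpha$ is $\mathfrak{su}(2,1)$-valued for the Hermitian form \eqref{inner}, which is immediate from the shapes of $A$ and $B$ together with $s>0$. The decisive step is the integrability $d\alpha+\alpha\wedge\alpha=0$: its two components are precisely \eqref{e-local}, and they hold because $q$ is holomorphic and $u$ solves \eqref{structure}. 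Since $D$ is simply connected, the Cartan--Darboux (Frobenius) theorem then produces a map $F\colon D\to SU(2,1)$ with $F^{-1}dF=\alpha$, unique up to left multiplication by a constant element of $SU(2,1)$.

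Next I would recover $f$ as the third column of $F$ and set $\varphi=\pi\circ f$. Since $F\in SU(2,1)$ its columns are orthonormal for \eqref{inner}, so $\langle f,f\rangle=-1$ places $f$ in $S_-$. Reading off the third columns of $A$ and $B$ gives $f_z=sF_1$ and $f_{\bar z}=sF_2$, where $F_1,F_2$ are the first two columns of $F$; orthonormality then yields the Legendrian and conformality relations $\langle f,f_z\rangle=\langle f,f_{\bar z}\rangle=\langle f_z,f_{\bar z}\rangle=0$, while differentiating once more and using the remaining columns of $A,B$ produces the harmonic map equation $f_{z\bar z}=s^2 f$. Hence $\varphi$ is a conformal {\mli} with induced metric $2s^2|dz|^2=e^u g_\sigma$ and cubic differential $q$. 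The uniqueness clause is built in: two frames solving $F^{-1}dF=\alpha$ differ by a left constant $C\in SU(2,1)$, and since $SU(2,1)$ covers the holomorphic isometry group $PU(2,1)$ of $\CH$, the corresponding immersions differ by such an isometry.

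The step I expect to be the main obstacle is the equivariance under $\pi_1(\Sigma)$. Because $g_\sigma$, $u$, and $q\,dz^3$ descend to $\Sigma$, a deck transformation $\gamma$ only changes the conformal coordinate; the coordinate dependence of the normalization $f_z/|f_z|$ then makes $\gamma^*\alpha$ gauge-equivalent to $\alpha$, not equal to it, through the unit-modulus diagonal factor $K_\gamma=\mathrm{diag}\!\big(\gamma'/|\gamma'|,\,\overline{\gamma'}/|\gamma'|,\,1\big)\in SU(2,1)$. I would verify $\gamma^*\alpha=\mathrm{Ad}(K_\gamma^{-1})\alpha+K_\gamma^{-1}dK_\gamma$, so that $F\circ\gamma$ and $FK_\gamma$ share the Maurer--Cartan form $\gamma^*\alpha$; uniqueness then forces $F\circ\gamma=\rho(\gamma)\,FK_\gamma$ for a constant $\rho(\gamma)\in SU(2,1)$. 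As $K_\gamma$ fixes the third basis vector, comparing third columns yields $f\circ\gamma=\rho(\gamma)f$, hence $\varphi\circ\gamma=\rho(\gamma)\cdot\varphi$, and the cocycle relation makes $\rho\colon\pi_1(\Sigma)\to SU(2,1)$ a homomorphism for which $\varphi$ is equivariant. Pinning down this gauge factor and its cocycle behavior exactly, while tracking the $S^1$-fiber ambiguity in the lift, is the delicate point.
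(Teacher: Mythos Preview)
Your argument is correct and follows the same route the paper takes: the forward direction is exactly the computation preceding the proposition, and the converse proceeds by building $\alpha$ from $(s,q)$, invoking the flatness $d\alpha+\alpha\wedge\alpha=0$ (equivalent to \eqref{e-local}), and integrating on the simply connected $D$ to obtain the $SU(2,1)$-frame $F$, then reading off $f$ and $\varphi$. The paper does not write this out as a separate proof; it treats the proposition as a summary of the preceding discussion and the paragraph after it, deferring the integration and equivariance details to \cite{LM10}.

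Where you go beyond the paper is in making the equivariance explicit. Your observation that $\gamma^*\alpha$ is only gauge-equivalent to $\alpha$ via the diagonal phase $K_\gamma$ (coming from the coordinate dependence of $f_z/|f_z|$), and that the resulting cocycle yields $\rho\colon\pi_1(\Sigma)\to SU(2,1)$ with $f\circ\gamma=\rho(\gamma)f$, is the right mechanism; the paper simply asserts equivariance and points to \cite{LM10}. One small caution: check the sign and orientation of your $K_\gamma$ carefully, since under $w=\gamma(z)$ one has $f_w/|f_w|=(f_z/|f_z|)\cdot\overline{\gamma'}/|\gamma'|$, so the diagonal entries may be the conjugates of what you wrote---this does not affect the argument, only the explicit formula.
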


This Proposition provides the reconstruction scheme from solving
\eqref{structure}: Given a {\cs} $\sigma$ on the closed surface
$\Sigma$, and a {\hcd} $q$ on $(\Sigma, \sigma)$, each solution
$u(z)$ of equation \eqref{structure} corresponds to an induced
metric $2s^2 |dz|^2$ on $D$. From the formula in \eqref{alpha}, the
$1$-form $\alpha$ is therefore determined. As in \cite{LM10}, one
integrates the Maurer-Cartan equations $F^{-1}dF = \alpha$ to obtain
a frame $F = (f_z/|f_z|\,\,\, f_{\bar z}/|f_{\bar z}| \,\,\, f)$.
The Lagrangian and minimal properties are encoded in verifying $F
\in SU(2,1)$. Therefore, we reduce the problem of obtaining surface
group equivariant {\mli} of the disk $D$ into $\CH$ to the
solvability of equation \eqref{structure} of data $(\sigma, q)$.

\subsection{The Second Fundamental Form}

For any immersion problem, the {\sff} is a natural key object of the study, and they often give rise to the concepts
of {\pc}s. In the case of minimal hypersuface in a {\htm}, the {\sff} is described as the real part of a {\hqd}. In this
subsection, we show that an analogous fact is true for minimal Lagrangian surfaces in $\CH$, namely, we describe
all the components of the {\sff} in terms of real and imaginary parts of a {\hcd} $q$.

To compute the second fundamental form, we consider for a conformal coordinate $z=x+iy$ the orthonormal
basis of the tangent space of the immersed surface given by $E_1=f_x/|f_x|$, $E_2 = f_y/|f_y|$. Since the
surface is Lagrangian, by \eqref{decomp}, $(E_1, E_2,iE_1,iE_2)$ form an orthonormal basis in the tangent
space of $\CH$. For tangent vector fields $X,Y\in \Gamma(T\varphi(D))$, the {\sff} of the immersion into
$\CH$ is given by $$ \mathrm{I\!I}(X,Y) = \sum_{j=1}^2 g(\nabla_X Y,iE_j)iE_j,$$ where $g(v,w)$ is the
Riemannian metric on $\CH$ inherited from the inner product \eqref{inner} on ${\mathbb C}^{2,1}$, and
$\nabla$ is the Levi-Civita connection on $\CH$, which is the projection of the flat connection on ${\mathbb C}^{2,1}$.

We find that there are three independent entries for the {\sff} $\II$, which allows us to arrange them in a
$2 \times 2$ symmetric matrix:

\begin{pro}\label{sff}
All the components of $\II$ are determined by the metric $2s^2|dz|^2$
and the cubic differential $q$. In particular, we have
\begin{eqnarray*}
 \II(E_1,E_1) &=& 2^{-\frac12}s^{-3}(-\,{\rm Im}\,q\cdot iE_1
 -{\rm Re}\,q\cdot iE_2),\\
   \II(E_1,E_2) &=& 2^{-\frac12}s^{-3}(-\,{\rm Re}\,q\cdot iE_1
   +{\rm Im}\,q\cdot iE_2), \\
 \II(E_2,E_2) &=& 2^{-\frac12}s^{-3}(\,{\rm Im}\,q\cdot iE_1
 + {\rm Re}\,q \cdot iE_2).
\end{eqnarray*}
\end{pro}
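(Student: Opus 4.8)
The plan is to compute $\II$ directly from its definition as the normal projection of the ambient flat connection, reducing everything to the structure relations already established for the Legendrian lift $f$. First I would set up the real frame. Writing $z=x+iy$, we have $\partial_x=\partial_z+\partial_{\bar z}$ and $\partial_y=i(\partial_z-\partial_{\bar z})$, so $f_x=f_z+f_{\bar z}$ and $f_y=i(f_z-f_{\bar z})$. Using the Hermitian identity $\langle iv,iw\rangle=\langle v,w\rangle$ together with $\langle f_z,f_z\rangle=\langle f_{\bar z},f_{\bar z}\rangle=s^2$ and $\langle f_z,f_{\bar z}\rangle=0$, a short calculation gives $\langle f_x,f_x\rangle=\langle f_y,f_y\rangle=2s^2$ and $\langle f_x,f_y\rangle=0$. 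This confirms $|f_x|=|f_y|=\sqrt2\,s$, that $E_1=f_x/(\sqrt2\,s)$ and $E_2=f_y/(\sqrt2\,s)$ are orthonormal (consistent with the induced metric $2s^2|dz|^2$), and that the normal frame is $iE_1=if_x/(\sqrt2\,s)$, $iE_2=if_y/(\sqrt2\,s)$.

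Next I would reduce the normal projection to a single ingredient. Since $\nabla$ is the projection of the flat connection $D$ on $\mathbb C^{2,1}$ and the normal space is spanned by $iE_1,iE_2$, we have $\II(E_a,E_b)=\sum_{j}{\rm Re}\langle D_{E_a}E_b,iE_j\rangle\,iE_j$. Writing $E_b=(\sqrt2\,s)^{-1}f_{(b)}$ with $f_{(1)}=f_x$, $f_{(2)}=f_y$, the derivative of the scalar factor produces a term proportional to $f_{(b)}$, which is tangential and drops out under the normal projection; hence only $D_{E_a}E_b=(2s^2)^{-1}\partial_a f_{(b)}+(\text{tangential})$ survives. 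Because $iE_j=(\sqrt2\,s)^{-1}if_{(j)}$ and ${\rm Re}\langle w,iv\rangle={\rm Re}(-i\langle w,v\rangle)={\rm Im}\langle w,v\rangle$, the entire computation collapses to
\be
\II(E_a,E_b)=\frac{1}{2\sqrt2\,s^3}\sum_{j=1}^2{\rm Im}\langle\partial_a f_{(b)},f_{(j)}\rangle\,iE_j .
\ene

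The remaining step is to evaluate the six imaginary parts. I would expand the second partials $f_{xx}=f_{zz}+2f_{z\bar z}+f_{\bar z\bar z}$, $f_{xy}=i(f_{zz}-f_{\bar z\bar z})$, $f_{yy}=-f_{zz}+2f_{z\bar z}-f_{\bar z\bar z}$, and pair them against $f_z,f_{\bar z}$ using the structure relations $\langle f_{zz},f_z\rangle=2ss_z$, $\langle f_{zz},f_{\bar z}\rangle=-q$, $\langle f_{\bar z\bar z},f_z\rangle=\bar q$, $\langle f_{\bar z\bar z},f_{\bar z}\rangle=2ss_{\bar z}$, together with the harmonicity relations $\langle f_{z\bar z},f_z\rangle=\langle f_{z\bar z},f_{\bar z}\rangle=0$. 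The key simplification is that, since $s$ is real, $s_{\bar z}=\overline{s_z}$, so every term carrying a factor $ss_z$ or $ss_{\bar z}$ is real and disappears once we take imaginary parts; what remains is built purely from ${\rm Re}\,q$ and ${\rm Im}\,q$. Substituting the resulting values of ${\rm Im}\langle\partial_a f_{(b)},f_{(j)}\rangle$ into the displayed formula (note $2^{-3/2}\cdot 2=2^{-1/2}$) yields the three stated expressions. As a consistency check, they give $\II(E_1,E_1)+\II(E_2,E_2)=0$, i.e.\ the trace-free property encoding minimality of $\varphi$.

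The computation is entirely mechanical; the only genuine obstacle is bookkeeping — keeping the Hermitian versus real conventions straight (in particular $\langle iv,iw\rangle=\langle v,w\rangle$, ${\rm Re}\langle w,iv\rangle={\rm Im}\langle w,v\rangle$, and the fact that $g={\rm Re}\langle\cdot,\cdot\rangle$ on horizontal vectors) and correctly separating real from imaginary contributions. Once one observes that the metric-derivative terms are automatically real, the dependence of $\II$ on $q$ alone falls out cleanly.
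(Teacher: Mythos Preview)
Your proposal is correct and follows essentially the same approach as the paper: use that the Levi-Civita connection on $\CH$ is the projection of the flat connection on $\mathbb C^{2,1}$, write $E_1,E_2$ in terms of $f_x,f_y$, expand the second partials in $f_{zz},f_{z\bar z},f_{\bar z\bar z}$, and plug in the structure relations $\langle f_{zz},f_z\rangle=2ss_z$, $\langle f_{zz},f_{\bar z}\rangle=-q$, etc. Your rewriting $g(\cdot,i\cdot)={\rm Re}\langle\cdot,i\cdot\rangle={\rm Im}\langle\cdot,\cdot\rangle$ is exactly the paper's computation in slightly different notation; the paper carries out only $\II(E_1,E_1)$ explicitly and declares the rest similar, whereas you package all cases into one formula. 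One small wording issue: the individual terms $ss_z$, $ss_{\bar z}$ are not real on their own; what actually happens is that in each pairing they combine (via $s_z+s_{\bar z}=s_x$ or $s_z-s_{\bar z}=-is_y$, both giving real multiples of $s$ after the relevant factor of $i$) to produce a real contribution, which then drops under ${\rm Im}$. This is harmless for the argument, but you should phrase it that way.
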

\begin{proof}
We compute only $\II(E_1,E_1)$, as the rest are similar. Since the Levi-Civita connection on $\CH$ is the projection of
the flat connection on ${\mathbb C}^{2,1}$, we can compute $\nabla_X Y$ as $(XY)f$. Therefore,
\begin{eqnarray*}
\II(E_1,E_1) &=& \frac1{2s^2} \II(f_x, f_x) \\
&=& \sum_{j=1}^2 \frac1{2s^2} g(f_{xx},iE_j)\cdot iE_j \\
&=&\frac1{2\sqrt2\cdot s^3} [g(f_{xx}, i f _x)\cdot iE_1 + g(f_{xx},
i f_y)\cdot i E_2] \\
\end{eqnarray*}
Now compute
\begin{eqnarray*}
g(f_{xx},if_x) &=& {\rm Re}\langle f_{zz} + 2f_{z\bar z} + f_{\bar z
\bar z}, if_z+if_{\bar z} \rangle \\
&=& {\rm Re}[-i(2ss_z-q+\bar q + 2ss_{\bar z})] \\
&=& -2\,{\rm Im}\, q.
\end{eqnarray*}
We may similarly compute $g(f_{xx},if_y) = -2\,{\rm Re}\,q$. So
altogether,
$$\II(E_1,E_1) = \frac1{2\sqrt2\cdot s^3} [-2\,{\rm Im}\, q \cdot
iE_1 - 2\,{\rm Re}\, q \cdot iE_2].$$
\end{proof}
\vskip 0.1in

\section{General Existence Results}

We now consider a family of equations \eqref{structure} determined the ray $tq$ ($t \geq 0$) in the space of
{\hcd}s $C(\sigma)$:
 \be\label{t-e}
\Delta u(z,t) + 2 -2e^{u(z,t)} -16t^2\|q\|^2 e^{-2u(z,t)} = 0.
 \ene
 Recall from \eqref{q} that $\|q\|^2 = \frac{q\bar{q}}{g_\sigma^{3}}$.

 It is an immediate consequence of the {\maxp} that we have:
\bpo\label{upbd}
Any solution $u$ to \eqref{t-e} satisfies  $u \leq 0$.
\epo

Since $C(\sigma)$ is a finite dimensional vector space, this
approach allows us to fix $q$ and focus on finding interval of the
parameter $t$ for which solutions to \eqref{t-e} exists. This set up
is quite standard in nonlinear analysis, where we have access to
several important techniques such as the continuity method and the
variational method. Equations \eqref{e} and {\eqref{t-e} are
very similar to the equations in the problem of minimal immersions
of closed surfaces into {\htm}s first introduced by Uhlenbeck
\cite{Uhl83} and further studied in \cite{HL11}.

\subsection{Nonexistence} In this subsection, we deal with (possibly) large values for parameter $t$.
\bt\label{non} There exists a constant $T = T(q, \sigma)$ such that
equation \eqref{t-e} does not admit any solution for any $t \geq
T$. \et Note that, the solvability of equation \eqref{t-e} is a
necessary condition for the existence of a {\mli} of $\Sigma$ into
$\CH$. This theorem is similar to Proposition 5.8 in
\cite{LM10}. We include a proof here for the sake of completeness
since it is very short. \bp We integrate  equation \eqref{t-e}
with respect to the {\hym} $g_\sigma$: \be \label{t} A_\sigma = 8t^2
\int_\Sigma \|q\|^2 e^{-2u}dA_\sigma + \int_\Sigma e^{u}dA_\sigma >
8t^2 \int_\Sigma \|q\|^2 e^{-2u}dA_\sigma. \ene Meanwhile, we apply
the H\"{o}lder's inequality, and Proposition \ref{upbd}: \bear
\int_\Sigma \|q\|^{2/3} dA_\sigma &=& \int_\Sigma \|q\|^{2/3} e^{-2u/3}e^{2u/3}dA_\sigma \\
& \leq & \{ \int_\Sigma \|q\|^2 e^{-2u}dA_\sigma\}^{1/3}\{\int_\Sigma e^{u}dA_\sigma\}^{2/3}\\
& \leq & A_\sigma^{2/3}\{\int_\Sigma \|q\|^2 e^{-2u}dA_\sigma\}^{1/3}.
\eear
Applying above inequality to \eqref{t}, and noting that the hyperbolic area of $\Sigma$ is $A_\sigma = 2\pi(2g-2)$, we find:
\beq
t < \{\frac{2\pi(g-1)}{\int_\Sigma \|q\|^{2/3} dA_\sigma}\}^{3/2}.
\eeq
Now we can simply choose $T = \{\frac{2\pi(g-1)}{\int_\Sigma \|q\|^{2/3} dA_\sigma}\}^{3/2}$.
\ep

\subsection{Existence} In this subsection, we take advantage of this variational setup to apply the {\ift} to prove the
solvability of equation \eqref{t-e}, therefore the general existence of {\mli} of $\tilde\Sigma$ into $\CH$.

We consider the nonlinear map $F: W^{2,2} (\Sigma) \times [0,\infty) \to L^2 (\Sigma)$ defined by
\be\label{FF}
  F(u,t) = \Delta u + 2  - 2e^u - 16t^2\|q\|^2e^{-2u},
\ene
where $W^{2,k} (\Sigma)$ stands for the classical Sobolev space. At each $t \geq 0$ fixed, the {\lo}
$L(u,t) : W^{2,2} (\Sigma) \to L^2 (\Sigma)$ associated to $F$ is given by
\be\label{L}
  L(u,t) =- \Delta  + 2e^{-2u} \left(e^{3u} - 16t^2 \|q\|^2\right).
\ene
It is easy to see that $L(u,0) > 0$, since $-\Delta$ has nonnegative eigenvalues. In standard theory, the operator $L(u,t)$
in \eqref{L} is crucial in order to apply the implicit function theorem. In particular, when the {\lo} $L$ has all positive
eigenvalues, the differential of the map $F(u,t)$ in \eqref{FF} is onto.

The existence of solutions for small $t$ is implied by the following:
\bt \label{curve}
There exist a constant $T_0 = T_0(\sigma, q) > 0$ and a smooth curve
$$
   \gamma: [0,T_0] \to  W^{2,2} (\Sigma) \times [0,\infty)
   \qquad
   t \mapsto (u(t),t),
$$
such that
\begin{enumerate}
\item[(a)]
$\gamma(0) = (0,0)$ and $F(\gamma(t)) = 0$ for all $ t \in [0, T_0]$.
\item[(b)]
$L(u(t), t) > 0 $ for all $t \in [0, T_0)$.
\item[(c)]
${\rm Ker} \big( L(u(T_0), T_0) \big) \not = \{0 \}$.
\item[(d)]
The family of solutions to $F(u(t),t) = 0$ is unique near $\gamma(0)
= (0,0)$.
\end{enumerate}
\et
\bp
We follow closely the existence of solutions in
\cite{Uhl83}.

We use the continuity method. Let $E=\{t\in[0,\infty) :$ there is a
unique smooth solution $\gamma(\tau)$ to $\gamma(0)=(0,0)$ so that
$F(\gamma(\tau))=0$ and $L(u(\tau),\tau)>0$ for all
$\tau\in[0,t]\}.$ Clearly $E$ includes 0. $E$ is open by the {\ift}
and since $L(u(t),t)>0$. Applying  Theorem \ref{u-bound} below
and standard elliptic theory, we find that $E$ is relatively closed
in $[0,T_0)$, where $T_0$ is defined as the smallest $t$ so that (c)
holds. Therefore the statements (a) (b) (c) and (d) follow.

\ep
\br
\ben
\item
Note that by standard regularity theory, the solution $u(t)$
obtained above belongs to the class $C^{\infty}(\Sigma)$.
\item One can show that for $t^2\|q\|^2\le\frac1{54}$, a lower bound
on $u$ along $\gamma$ from \cite{LM10} implies that $L$ is a
positive operator, and thus we find an alternate proof of the
existence result in \cite{LM10}.
\een
\er
\subsection{Estimates for closedness}
We start with a lemma that we will use later:
\bl \label{leg-trans}
For $a\ge 1$ and $b\ge 0$, we have
\be \label{leg}
ab\le H(a)+H^*(b),
\ene
where $H(a) = \frac14 a(\log a)^2$ and $H^*(b)= \frac12 e^{-1+\sqrt{1+4b}}(-1+\sqrt{1+4b})$.
\el
\begin{proof}
For $a\ge 1$, $H(a)$ is a convex function. Then we may compute $$H^*(b) = \sup_{a\ge 1} [ab-H(a)]$$ as
the Legendre transform of $H$. Indeed, we have
\beq
H'(a) = \frac14 (\log a)^2 + \frac12 \log a,
\eeq
and consequently, for $b=H'(a)$,
\beq
H^*(b)=H^*(\frac14 (\log a)^2 + \frac12 \log a) = aH'(a) - H(a) = \frac12 a\log a.
\eeq
It is then easy to solve for the formula of $H^*(b)$.
\end{proof}
We are left to show the following key estimate to complete the proof of  Theorem \ref{curve}, namely, the
solutions on $t \in [0,T_0)$ given by the {\ift} can be extended to $t = T_0$.
\begin{rem}
The following type of estimate is known to Uhlenbeck \cite[p.\ 164]{Uhl83}, but
the proof is not included in \cite{Uhl83}.
\end{rem}
\bt\label{u-bound}
Let $p\in(1,\infty)$. Then there is a constant $C = C(\sigma, q, p)$ such that for every solution $u$ of along the path $\gamma$ in
 Theorem \ref{curve} from $t=0$ satisfying $F(u,t)=0$ and $L(u,t)\ge0$, we have $$ \|u\|_{W^{2,p}}\le C.$$
\et
\bp
By the {\ift} around $t=0$, we may assume there is a fixed $\epsilon>0$ so that $t \ge \epsilon$. Now we assume $u$ is a solution
on the {\scv} $\gamma$ with $L(u,t)\ge0$, and we integrate both sides of \eqref{t-e} {\wrt} the {\hym} on $\Sigma$ to find:
\beq
A_\sigma = \int_\Sigma e^u + 8t^2 \int_\Sigma \|q\|^2 e^{-2u},
\eeq
where we recall that $A_\sigma = 2\pi (2g-2)$ is the hyperbolic area of $\Sigma$.

Since  $t\ge\epsilon$, we have for a positive constant
$C_1 = C_1(\epsilon)$,
\be \label{bound-int-term}
 \int_\Sigma e^{-2u}\|q\|^2\le C_1.
\ene
Since $q$ is a prescribed {\hcd} on a closed surface $(\Sigma,\sigma)$, it is well-known that $q$
has isolated zeros.

To derive an integral bound on $u$, let $n$ be the largest order of
all the zeros of $q$, and let
$\ell<\frac1{n+1}$, $\alpha=\frac1\ell$ and $\beta$ be the conjugate exponent of $\alpha$ (namely $\frac1\alpha + \frac1\beta =1$).
Now $\|q^{-2\ell}\|_\beta$ is finite, which allows us to apply H\"older's Inequality:
\beq
\int_\Sigma (e^{-2u})^\ell \le \|(e^{-2u}\|q\|^2)^\ell\|_\alpha \cdot \|q^{-2\ell}\|_\beta<C_2,
\eeq
for some $C_2 = C_2(\sigma, q, \epsilon) > 0$.

Since $u\le0$ (Proposition \ref{upbd}), each $|u|^p$ is dominated by $e^{-2\ell u}$, and we have uniform $L^p$ bounds, for any $p > 1$,
\be \label{Lp-bound}
\|u\|_p \le C_3 = C_3 (\sigma, q, \epsilon, p).
\ene
\vskip 0.1in
Furthermore, since $\langle L(u,t)u,u\rangle \ge 0$, we have
\begin{eqnarray}\label{Lu}
\langle L(u,t)u,u\rangle &=& \int_\Sigma (-\Delta u + 2e^{-2u}(e^{3u} -16t^2\|q\|^2)u)u \nonumber \\
&=& \int_\Sigma |\nabla u|^2 + 2e^u u^2 - 32e^{-2u}t^2\|q\|^2 u^2 \ge 0.
\end{eqnarray}
\vskip 0.1in
We then multiply equation \eqref{t-e} by $u$ and integrate by parts to find:
 \be\label{uFu}
 \int_\Sigma  |\nabla u|^2 = \int_\Sigma 2u -2ue^u -16t^2\|q\|^2ue^{-2u} .
 \ene
  Applying \eqref{uFu} to the inequality \eqref{Lu}, we have
 \be\label{u-integral-ineq} \int_\Sigma 2u + 2e^u(u^2-u) -
16t^2\|q\|^2e^{-2u}(2u^2+u) \ge 0.
 \ene
 \vskip 0.1in
  Now the
combination of $u\le0$ and the $L^p$-bound \eqref{Lp-bound} gives the
following:
 \be \label{bd-integral} \int_\Sigma
16t^2\|q\|^2e^{-2u}u^2\le C_4,
 \ene
 for some uniform constant $C_4 =
C_4 (\sigma, q, \epsilon, p) > 0$. We note the extra $u^2$ in this
integral, together with the Green's function representation of
solutions to \eqref{t-e}, will be enough to prove uniform
$L^\infty$-estimates on $u$.

To proceed, we define the following simplified notion for two functions $f$ and $g$: $f = g + O(1)$ if $|f-g|$
is uniformly bounded from above by some positive constant.

Let $G(x,y)$ be the Green's function for the  hyperbolic Laplacian. We have
\beq
G(x,y) = \frac1{2\pi}\log (d(x,y)) + O(1),
\eeq
for $d$ the hyperbolic distance.

Let $\bar u$ be the average of $u$, which is bounded by the $L^p$-bound \eqref{Lp-bound}. We apply the
Green's formula to find:
\bear
u(x) &=& \bar u + \int_\Sigma G(x,y) \Delta u(y)\,dVol(y) \\
&=& \int_\Sigma G(x,y)[-2+2e^{u(y)}+16t^2\|q(y)\|^2e^{-2u(y)}]\,dVol(y) + O(1)\\
&=& 16t^2\int_\Sigma G(x,y)\|q(y)\|^2e^{-2u(y)}\,dVol(y) + O(1).
\eear

Now choose a complex normal coordinate disk $\mathcal D$ centered at
$x$ (so that $x=0$), and use the asymptotics of the Green's
function, together with (\ref{bound-int-term}), to find
 \beq u(0) =
\frac{8t^2}{\pi} \int_{\mathcal D} \log|y| e^{-2u} \|q\|^2 \,dVol(y)
+ O(1). \eeq
\vskip 0.1in
We want to compare this integral to $\int
t^2 e^{-2u}u^2\|q\|^2$, for which we have a bound by
\eqref{bd-integral}. We now choose $a=e^{-2u}$ and
$b=\big|\log|y|\big|$. It is easy to verify that $a \ge 1$ and
$b=\big|\log|y|\big| \ge 0$. Hence $H(a) = u^2 e^{-2u}$ and $H^*(b)=
\frac12 e^{-1+\sqrt{1+4b}}(-1+\sqrt{1+4b})$. We can now apply
Lemma \ref{leg-trans} to find: \bear
 |u(0)| &\le& \frac{8t^2}{\pi}\int_{\mathcal D} \big|\log|y|\big|\cdot e^{-2u}\cdot \|q\|^2\,dVol(y) + O(1)\\
 &\le&  \frac{8t^2}{\pi}\int_\Sigma e^{-2u}u^2\|q\|^2 +{ \frac{8t^2}{\pi}} \int_{\mathcal D} H^*(b)\|q\|^2\,dVol(y)  + O(1).
 \eear
 Both these terms are bounded, the first by (\ref{bd-integral}), and the second by a direct computation.
 Since $0=x\in\Sigma$ was arbitrary, we have a uniform bound $$\|u\|_{L^\infty}\le C_5$$ This bound can then be plugged into
 the equation $F(u,t)=0$ to find uniform $L^\infty$ bounds on $\Delta u$. Thus standard $L^p$ theory applies, and we have
 uniform $W^{2,p}$ bounds on $u$. Higher regularity is standard.
\ep
\section{Non-Uniqueness}
In previous sections, we have proved parts (ii) and (iii) of  Theorem \ref{main}. By the {\ift}, the family of solutions to the
structure equation is unique for the family including $\gamma(0)$. In this section, we address the issue of nonuniqueness
for this problem, i.e., we construct a {\mpa} type solution for each parameter value $t$ on $(0,T_0)$. This will complete
the proof of part (i).

\subsection{New Formulation} We start with a new formulation of the problem in order to prove a compactness
result. This is necessary because the original Euler-Lagrange functional associated to the structure equation \eqref{t-e} does not
satisfy a compactness property that is required to apply the {\mpt}. Our approach is to follow the strategy used in \cite{HL11} for
the {\mi} problem in {\htm}s: we define a new functional and a new norm for the structure equation \eqref{t-e}, and show
the critical points of the new functional coincide with the solutions of \eqref{t-e}, and in next subsection we prove a compactness
theorem for the new functional and norm, and finally apply the {\mpt} in \cite{AR73} to produce a second solution for each $t$ on $(0,T_0)$.

To proceed, we need to capture the nonlinearities arising from the structure equation \eqref{t-e}, which now we recall:
\beq
\Delta u + 2 - 2e^{u} -V e^{-2u} = 0,
\eeq
where we set $V =V(t,z) = 16t^2\|q\|^2$.

Let $H^1(\Sigma)$ be the usual Sobolev space
$$
   H^1 (\Sigma) := \{ u \in L^2 (\Sigma) \, \colon \, \nabla u \in L^2 (\Sigma) \},
$$
equiped with the norm
$$
   \langle f, g\rangle_{H^1} := \int_{\Sigma} \Big\{ \nabla f \nabla g + fg \Big\}.
$$
\vskip 0.1in
Then for $u \in H^1(\Sigma)$, the Euler-Lagrange functional for \eqref{t-e} is
\be\label{I}
  I(u) :=
  {\frac12}\int_{\Sigma}|\nabla u|^2 + \int_{\Sigma} \left(2e^u - 2u -{V\frac{e^{-2u}}{2}}\right).
\ene
Note that this functional does not satisfy the Palais-Smale compactness condition.
\vskip 0.1in
We now explicitly construct smooth functions as follows:
\be\label{f-1}
  f_1 (s) := \left\{
  \begin{array}{cl}
    2 - 2e^{s}                     & \hbox{ if } s \leq 0 \\
    -\theta s^{\theta -1}          & \hbox{ if } s > 1
  \end{array}, \right.
\ene where $\theta > 2$ is a constant and we require $f_1(s)<0$ for
$s>0$. Let \be\label{f-2}
  f_2 (s) := \left\{
  \begin{array}{cl}
    s - e^{-2 s}        & \hbox{ if } s \leq 0 \\
    0                   & \hbox{ if } s > 1
  \end{array}. \right.
\ene
In addition, it is easy to see that we can also require $f_2(s) < 0$ for all $s \in (0,1)$. \vskip 0.1in With these functions, we can
transfer equation \eqref{t-e} to a new equation which is better
suited for variational methods: \bl\label{NG} The
structure equation ~\eqref{t-e} is equivalent to the new equation
\be\label{new}
  - \Delta u + V u - \big( f_1 (u) +  V f_2 (u) \big) = 0  \,.
\ene
\el
\begin{proof}
First, let $u$ be a solution to equation \eqref{t-e}. Then from Proposition \ref{upbd}, we have $u \le 0$. In this case, from the
explicit formulas in \eqref{f-1} and \eqref{f-2}, we find that $f_1(u) = 2-2e^u$ and $f_2(u) = u-e^{-2u}$.

We now verify that
\bear
-\Delta u + Vu -  f_1 (u) - V f_2 (u) &=& - \Delta u +Vu -2 +2e^{u} - V(z)u + Ve^{-2u} \\
&=& - \Delta u -2 + 2e^{u} + Ve^{-2u} \\
&=& 0.
\eear
Conversely, let $u$ be a solution to equation \eqref{new}. We now apply the {\maxp} to ~\eqref{new}.
At the maximum point $p_0$ of $u$, we have $\Delta u(p_0) \leq 0$. Note that these functions $f_1$ and $f_2$ enjoy the
following properties:
\beq
  f_1 (s) < 0 \quad \forall s > 0,
  \qquad
  f_2 (s) \leq min\{0,s\}
  \quad \forall s \in \mathbb R .
\eeq Therefore at $p_0$, we have either $u(p_0)\le0$ or
  \beq
 Vu(p_0) \leq f_1 (u(p_0)) + V(z) f_2 (u(p_0)) \leq  f_1 (u(p_0)) + Vu(p_0).
\eeq
This implies that $f_1(u(p_0)) \geq 0$, and therefore $u(p_0) \leq 0$ by the definition  \eqref{f-1} of $f_1$. Since
$p_0$ is the maximum of $u$, we have just showed $u \le 0$. It is then easy to see that the sets of solutions of
~\eqref{t-e} and ~\eqref{new} coincide.
\end{proof}
To define the functional for this new equation \eqref{new}, we need to introduce an appropriate norm on
$H^1(\Sigma)$ as follows:
\be\label{V}
  \langle f, g \rangle_V :=
  \int_{\Sigma} \Big\{ \nabla f \nabla g + V(z) fg \Big\}.
\ene
\vskip 0.1in
\bl \label{norms}
The norms $\| \cdot \|_{H^1}$, $\| \cdot \|_V$ on the Sobolev space $H^1(\Sigma)$ are equivalent.
\el
\bp
One direction is obvious, namely, since $V \in L^{\infty} (\Sigma)$, we have $\| \cdot \|_V \leq C \| \cdot \|_{H^1}$. To work
in the opposite direction, we set $\bar u:= \frac{1}{A_\sigma} \int_{\Sigma} u$ as the average of $u$ on $\Sigma$, where
$A_\sigma = 4\pi (g-1)$ is the hyperbolic area of the surface.

We start with
\be\label{triangle}
  \| u \|_{L^2} \leq \| u - \bar u \|_{L^2} + \| \bar u \|_{L^2},
\ene
and the first term on the right-hand side of \eqref{triangle} is taken care of by the Poincar\'e inequality, namely, we have
\be \label{eq:Poincare}
  \| u - \bar u \|_{L^2} \leq  C \| \nabla u \|_{L^2}.
\ene
\vskip 0.1in
For the second term, we use
\beq
|\bar u|^2 \leq 2u^2 + 2|u-\bar u|^2,
\eeq
and apply Poincar\'e inequality again to find:
\bear
  |\bar u|^2 \int_\Sigma V(z)  &\leq& 2 \int_\Sigma \Big\{ V(z) |\bar u - u|^2 + V(z) u^2 \Big\}
  \\
  &\leq&
  C \int_\Sigma \Big\{ |\bar u - u|^2 + V(z) u^2 \Big\}
  \\
  &\leq&
  C \int_\Sigma \Big\{ | \nabla u |^2 + V(z) u^2 \Big\} \\
  & = & C\|u\|_V^2.
\eear
Since $\int_\Sigma V >0$, we also bound the second term in the right-hand side of \eqref{triangle}, and complete the proof.
\ep
\vskip 0.1in
We integrate from the formulas \eqref{f-1} and \eqref{f-2} to define new functions:
\be\label{F-1}
  F_1 (s) := \left\{
  \begin{array}{cl}
    2s - 2e^{s} +2         & \hbox{ if } s \leq 0 \\
    -s^{\theta}          & \hbox{ if } s > 1
  \end{array}, \right.
\ene
and
\be\label{F-2}
  F_2 (s) := \left\{
  \begin{array}{cl}
    \frac12 (s^2 + e^{-2 s})        & \hbox{ if } s \leq 0 \\
    0                   & \hbox{ if } s > 1
  \end{array}. \right.
\ene
\vskip 0.1in
Note that the additive constant $2$ in the formula of $F_1(s)$ when $s \leq  0$ is designed such that $f_1(s) = F_1'(s) < 0$
when $s > 0$. Using these functions, we can now define the functional corresponding to the formulation in equation \eqref{new},
on the Hilbert space $H^1(\Sigma)$, as the following:
\be\label{F}
    \Fcal (u) :=  \frac{1}{2}
     \int_\Sigma \big\{  |\nabla u|^2 + V(z) u^2 \big\}
   - \int_\Sigma \big\{ F_1 (u) + V(z) F_2 (u)   \big\} ,
   \quad
   u \in H^1 (\Sigma) ,
\ene
\vskip 0.1in
This functional is well-defined by the Moser-Trudinger inequality, and it is clear that it is continuously differentiable.
\br
From this definition \eqref{F}, the critical points of $\Fcal$ are weak solutions of ~\eqref{new}, and hence solutions to the
structure equation \eqref{t-e} by Lemma ~\ref{NG}. Also by Lemma \ref{NG}, for each $t \in (0,T_0)$,
Theorem \ref{curve} provides a critical point of $\Fcal$ which is stable. Our next subsection is to show for each stable critical
point of $\Fcal$, there is another solution (of mountain pass type) corresponding to the same $t \in (0,T_0)$.
\er
\vskip 0.1in
We end this subsection by the following easy observation:
\bpo
Let $k$ be a negative constant, then we have
\be\label{neg}
\lim_{k \to -\infty}\Fcal(k) = -\infty.
\ene
\epo
\subsection{Strong Non-uniqueness} One more step remains before we can prove the (strong) non-uniqueness for the
{\mli} with $(\sigma, tq)$, where $\sigma \in \Tcal_g(\Sigma)$ and $q \in C(\sigma)$, for each $t \in (0,T_0)$, namely,
we have to show a compactness property for the functional $\Fcal$:
\bt\label{PS}
The functional $\Fcal$ in ~\eqref{F} satisfies the Palais-Smale compactness condition.
\et
\bp
Let us facilitate with the following notation. Let $O(1)$ as before, and we call a quantity $|f| = o(1)$ if $|f|$ tends to zero
when an appropriate limit is taken. Using this notation, the Palais-Smale compactness condition for the functional $\Fcal$
is equivalent to showing that any sequence of functions $\{u_n \in H^1(\Sigma)\}$ which satisfies
\be\label{Oo}
|\Fcal(u_n)| = O(1),   \qquad   \|\Fcal'( u_n) \|_{H^{-1}} = o(1),
\ene
admits a subsequence which converges strongly in $H^1(\Sigma)$.

Suppose $\{u_n \in H^1(\Sigma)\}$ is a sequence which satisfies \eqref{Oo}, and we will prove the
theorem in two steps: first we show $\{u_n\}$ is bounded, hence there is a weak limit in $H^1(\Sigma)$,
then we show this weak limit is actually strong.
\vskip 0.1in
Step one: $\|u_n\|_V = O(1)$.
\vskip 0.1in
To see this, we deduce from the expressions of the functional $\Fcal$ in \eqref{F}, the norm $\|\cdot\|_V$ in \eqref{V}, and
the assumption that $\Fcal(u_n) = O(1)$, we have
\be\label{un-V}
\frac12 \|u_n\|_V^2 \leq \int_\Sigma \{F_1(u_n) + VF_2(u_n)\} + O(1).
\ene
\vskip 0.1in
It is not hard to verify, from the definitions of functions $f_j(s)$ and $F_j(s)$, $j = 1,2$, by considering all three subintervals
for $s \in \R$: $(-\infty, 0)$, $[0, 1]$ and $(1,\infty)$, that, for $\theta > 2$,
\be\label{fF}
F_j(s) \leq \frac{s}{\theta} f_j(s) + O(1), \qquad j =1,2.
\ene
\vskip 0.1in
On the other hand, in the direction of $\xi$, we have
\beq
\Fcal'(u)(\xi) = \int_\Sigma (\nabla u\nabla \xi + Vu\xi) - \int_\Sigma \xi(f_1(u) + Vf_2(u)).
\eeq
We deduce from the assumption $\|\Fcal'( u_n) \|_{H^{-1}} = o(1)$ that
\beq
\Fcal(u_n)(u_n) = o(1)\|u_n\|_V.
\eeq
Therefore we have
\be\label{un-V2}
\|u_n\|_V^2 = \int_\Sigma u_n\{f_1(u_n) + Vf_2(u_n)\} + o(1)\|u_n\|_V.
\ene
\vskip 0.1in
We now continue from the estimate \eqref{un-V} to find
\bear
\frac12 \|u_n\|_V^2 &\leq& \int_\Sigma \frac{u_n}{\theta}\{f_1(u_n) + Vf_2(u_n)\} + O(1)\\
&=& \frac{1}{\theta} \|u_n\|_V^2 + O(1) + o(1)\|u_n\|_V.
\eear
Step one is now completed since $\theta>2$ is a constant. Therefore, we obtain, up to a subsequence, $\{u_n\}$ converges
weakly in $H^1(\Sigma)$ to some $\hat{u}$.
\vskip 0.1in
Step two: $\{u_n\}$ converges strongly to $\hat{u}$, namely, $\|u_n - \hat{u}\|_{H^1} = o(1)$.
\vskip 0.1in
To complete our proof, by the equivalence of two norms (Lemma \ref{norms}), we only have to show
$\|u_n - \hat{u}\|_{V}^2 = o(1)$. Meanwhile, we have
\be\label{hat}
\|u_n - \hat{u}\|_{V}^2 = \int_\Sigma \{(f_1(u_n)+Vf_2(u_n))(u_n-\hat{u})\}+o(1).
\ene

From the expression of $f_1(s)$ in \eqref{f-1} and the fact that
$H^1$ is compactly included in $L^p$ for all $p<\infty$ shows that
(perhaps going to a further subsequence)
we have $\{f_1(u_n)\}$ converges strongly to $f_1(\hat{u})$ in
$L^2(\Sigma)$. Similarly, from the expression of $f_2(s)$ in
\eqref{f-2}, and the Moser-Trudinger inequality, we find that
$\{f_2(u_n)\}$ converges strongly to $f_2(\hat{u})$ in
$L^2(\Sigma)$. Now step two is completed from \eqref{hat}. \ep We
now prove our main theorem of the section.
 \bt \label{exist-2-sol}
 For each $t \in
(0,T_0)$, where $T_0$ is defined in  Theorem \ref{curve}, for
fixed {\cs} $\sigma \in \Tcal_g(\Sigma)$, and {\hcd} $qdz^3 \in
C(\sigma)$, the structure equation \eqref{t-e} admits at least two
solutions.
 \et \bp
Since  equations \eqref{t-e} and \eqref{new} are equivalent, according to
Lemma \ref{NG}, and $\Fcal$ is the associated functional to equation \eqref{new},
we only have to show $\Fcal$ admits at least two critical points for each $t \in (0,T_0)$.

From Theorem \ref{curve}, we have $\Fcal$ admits one critical point $(u(t),t)$ such that the {\lo} is positive.
Therefore this (stable) solution $u(t)$ obtained in  Theorem \ref{curve} is a local minimizer for the functional
$\Fcal$ in the Hilbert space $H^1(\Sigma)$. There then exists a ball $B(u(t),r)$ in $H^1(\Sigma)$ such that
\beq
 \inf_{v \in \partial B(u(t) , r) } \Fcal(v) \geq \Fcal(u(t) )  \,.
\eeq
However, the limit in \eqref{neg} indicates that there must be some function $w \in H^1(\Sigma)$ such that
\beq
   w \not \in B(u(t), r),
   \qquad
   \Fcal(w) < \Fcal(u(t)) \,.
\eeq
Since the Palais-Smale condition is satisfied by  Theorem \ref{PS}, the additional critical point of $\Fcal$ for any
$t \in (0,T_0)$ is obtained by applying the Mountain Pass Theorem of Ambrosetti-Rabinowitz \cite{AR73}.
\ep
Naturally, one is interested in these solutions when $t$ goes to zero.
\bt
Let $\{u_n(t_n)\}$ be a sequence of solutions to the structure equation \eqref{t-e}. If $t_n \to 0$ as $n \to \infty$, then
along a subsequence, we have
\ben
\item
$u_n \to 0$ uniformly; or
\item
$\|u_n\|_{\infty} \to \infty$. \een \et \bp From  equation
\eqref{t-e}, we have
 \be\label{-delta}
 -\Delta u_n = 2 - 2e^{u_n} -16t^2\|q\|^2 e^{-2u_n}. \ene
 As $t_n \to 0$, possibly up to a
subsequence, we have either $\|u_n\|_{H^1} = O(1)$ or $\|u_n\|_{H^1}
\to \infty$. Let us assume that $\|u_n\|_{H^1} = O(1)$ for the
moment. In this case, $u_n$ converges weakly to some $\hat{u}$. For
any $\xi \in C^{\infty}(\Sigma)$, and using \eqref{-delta}, we find:
\bear \int_{\Sigma}\nabla\hat{u}\nabla\xi \leftarrow
\int_{\Sigma}\nabla u_n\nabla\xi &=&
\int_{\Sigma}2\xi \{1-e^{u_n} - 8t^2\|q\|^2 e^{-2u_n}\}\\
&\to& \int_{\Sigma}2\xi \{1-e^{\hat{u}}\},
\eear
after taking $n \to \infty$. Since $\xi \in C^{\infty}(\Sigma)$ is arbitrary, we have:
\beq
-\Delta \hat{u} = 2(1-e^{\hat{u}}).
\eeq
It is then easy to see via the {\maxp} that $\hat{u} = 0$. By the uniqueness of the solution near
$\gamma(0) = (0,0)$ in  Theorem \ref{curve}, $\{u_n\}$ coincide with the solutions on $\gamma$ as $t_n \to 0$.

Now we consider the other possibility, namely, $\|u_n\|_{H^1} \to \infty$. We need to show $\|u_n\|_{\infty} \to \infty$. If
otherwise, assuming that $\|u_n\|_{\infty} = O(1)$, we integrate from \eqref{-delta} to obtain
\beq
\int_{\Sigma}|\nabla u_n|^2 = \int_{\Sigma} u_n \{2-2e^{u_n} - V(z)e^{-2u_n}\} = O(1).
\eeq
This contradicts the assumption that $\|u_n\|_{H^1} \to \infty$.
\ep

\begin{rem}
It is not in general clear whether the mountain-pass solutions we
produce in Theorem \ref{exist-2-sol} form a continuous family. Indeed, both the openness and closedness estimates along
$\gamma$ depend on the stability condition $L>0$, which we expect to
fail for the mountain-pass solutions. We can say more near $T_0$, as
the continuous family $\gamma$ bifurcates there (one can use the
implicit function theorem as in Uhlenbeck \cite[p.\ 157]{Uhl83}). It
is unclear whether the bifurcated solutions for $t=T_0-\epsilon$
coincide with the mountain-pass solutions we construct.
\end{rem}

\section{The {\WP} Pairing}
In this section, we use the uniqueness of the solution of $F(u(t),t)$ on the solution curve $\gamma$ near
$\gamma(0) =(0,0)$ to define a functional on a
subspace of the space of {\mli}s in $\CH$. This functional turns out to have positive definite second variation over
{\TS}: a scalar multiple of the {\WP} pairing of {\hcd}s. In the case of closed {\ms} in a class of {\qf} manifolds, a
similar functional is shown (\cite{GHW10}) to be a potential function for the classical {\WPm} on {\TS}. This is also
an analog of the fact that the second variation of the energy functional for harmonic maps between closed surfaces
yields the {\WPm} on {\TS} (\cite{Wlf89}). We recall that the {\WP} pairing of {\hcd}s $q_1 dz^3$ and $q_2 dz^3$
is defined as
\be
\langle q_1, q_2\rangle_{WP} = \int_\Sigma \frac{q_1 {\bar q}_2}{g_\sigma^3}dA_\sigma.
\ene

Let $MLI(\Sigma)$ be the space of {\mli}s of (the covering of) a closed surface $\Sigma$ into $\CH$ and
$MLI_\epsilon(\Sigma)$ be the subspace of $MLI(\Sigma)$ such that the solution on $\gamma$ is unique
($\epsilon$- close to $\gamma(0) =(0,0)$). We have the complex dimension of $MLI_\epsilon(\Sigma)$ is
equal to $8g-8$. Now the following functional is well-defined:
\be
\Acal: MLI_\epsilon(\Sigma) \to \R, \ \ \ (\sigma, q) \mapsto -\int_\Sigma e^{u(t)}dA_\sigma,
\ene
i.e., it maps a {\mli} to negative of the surface area associated to the metric $e^{u}g_\sigma$. We consider this family
of functions $\Acal(t) = \Acal(\gamma(t))$ and its variations.

\bt We have the following
\ben
\item $\Acal(0) = 4\pi(1-g)$;
\item $\dot{\Acal} = \ddl{\Acal}{t}|_{t=0} = 0$;
\item $\ddot{\Acal} = \frac{d^2 \Acal}{dt^2}|_{t=0} = 16\int_\Sigma \|q\|^2 dA_\sigma$.
\een
\et
\bp
(i) Since $\gamma(0) =(0,0)$, we have $\Acal(0) = - \int_\Sigma dA_\sigma = 4\pi(1-g)$.
\vskip 0.1in

(ii) We denote $\dot{u} = \ddl{u}{t}|_{t=0}$, and differentiate equation \eqref{t-e} with respect to $t$ to find:
 \be\label{dotu}
 \Delta\dot{u} -32t\|q\|^2 e^{-2u} + 32t^2 \|q\|^2 e^{-2u}\dot{u} -2e^{u}\dot{u} = 0.
 \ene
 Now take value at $\gamma(0)$, we have
 \beq
 (\Delta -2)\dot{u} = 0,
 \eeq
 so $\dot{u} = 0$ from the {\maxp}. The claim (ii) now follows immediately.

 (iii) We denote the operator $D = -2(\Delta -2)^{-1}$. This is a positive, self-adjoint (with respect to the $L^2$ inner product
 of functions $\langle f_1,f_2 \rangle = \int_\Sigma f_1 f_2 dA_\sigma$) operator and $D(1) = 1$. Note that this
 operator plays a fundamental role in the {\WP} geometry of {\TS} (\cite{Wlp86}). Now we differentiate  equation
 \eqref{dotu} with respect to $t$ to find:
 \beq
  \Delta\ddot{u} -32\|q\|^2 e^{-2u} + 32t^2\|q\|^2 e^{-2u}\ddot{u} -2e^{u}\dot{u}^2 -2e^{u}\ddot{u} = 0.
 \eeq
 We evaluate above at $(0,0)$ to obtain
 \be
 (\Delta -2)\ddot{u} = 32\|q\|^2,
 \ene
and hence
 \be\label{ddot}
\ddot{u} = -16D(\|q\|^2).
 \ene
 Now the claim (iii) follows from the self-adjointness of the operator $D$ and \eqref{ddot}.
\ep

\bibliography{mlag16.bbl}
\end{document}